\newtheorem{theorem}{Theorem}[section]
\newtheorem{corollary}[theorem] {Corollary}
\newtheorem{question}[theorem]{Question}
\title{This is the title}
\begin{document}
\hrule\hrule\hrule\hrule\hrule
\vspace{0.3cm}	
\begin{center}
{\bf{NONLINEAR HEISENBERG-ROBERTSON-SCHRODINGER   UNCERTAINTY PRINCIPLE}}\\
\vspace{0.3cm}
\hrule\hrule\hrule\hrule\hrule
\vspace{0.3cm}
\textbf{K. MAHESH KRISHNA}\\
School of Mathematics and Natural Sciences\\
Chanakya University Global Campus\\
NH-648, Haraluru Village\\
Devanahalli Taluk, 	Bengaluru  North District\\
Karnataka State 562 110 India\\
Email: kmaheshak@gmail.com\\

Date: \today
\end{center}

\hrule\hrule
\vspace{0.5cm}
\textbf{Abstract}:  We derive an uncertainty principle for Lipschitz maps acting on subsets of Banach spaces. We show that this nonlinear uncertainty principle reduces to the Heisenberg-Robertson-Schrodinger uncertainty principle for linear operators acting on Hilbert spaces.

\textbf{Keywords}:   Uncertainty Principle, Lipschitz map, Banach space.

\textbf{Mathematics Subject Classification (2020)}: 26A16, 46B99.\\

\hrule

\hrule
\section{Introduction}
Let $\mathcal{H}$ be a complex Hilbert space and $A$ be a possibly unbounded self-adjoint linear operator defined on domain $\mathcal{D}(A)\subseteq \mathcal{H}$. For $h \in \mathcal{D}(A)$ with $\|h\|=1$, define the \textbf{uncertainty} of $A$ at the point $h$ as 
\begin{align*}
	\Delta _h(A)\coloneqq \|Ah-\langle Ah, h \rangle h \|=\sqrt{\|Ah\|^2-\langle Ah, h \rangle^2}. 
\end{align*}
In 1929, Robertson \cite{ROBERTSON} derived the following mathematical form of the uncertainty principle (also known as uncertainty relation) of Heisenberg derived in 1927 \cite{HEISENBERG} (English translation of 1927 original article by Heisenberg). Recall that for two linear operators $A:	\mathcal{D}(A)\to  \mathcal{H}$ and $B:	\mathcal{D}(B)\to  \mathcal{H}$, we define $[A,B] \coloneqq AB-BA$ and $\{A,B\}\coloneqq AB+BA$.
\begin{theorem} \cite{ROBERTSON, CASSIDY, HEISENBERG, VONNEUMANNBOOK, DEBNATHMIKUSINSKI, OZAWA} (\textbf{Heisenberg-Robertson Uncertainty Principle}) \label{HRT}
Let  $A:	\mathcal{D}(A)\to  \mathcal{H}$ and $B:	\mathcal{D}(B)\to  \mathcal{H}$  be self-adjoint operators. Then for all $h \in \mathcal{D}(AB)\cap  \mathcal{D}(BA)$ with $\|h\|=1$, we have 
\begin{align}\label{HR}
 \frac{1}{2} \left(\Delta _h(A)^2+	\Delta _h(B)^2\right)\geq \frac{1}{4} \left(\Delta _h(A)+	\Delta _h(B)\right)^2 \geq  \Delta _h(A)	\Delta _h(B)   \geq  \frac{1}{2}|\langle [A,B]h, h \rangle |.
\end{align}
\end{theorem}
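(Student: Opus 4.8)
The plan is to dispatch the three inequalities from right to left, since the two leftmost are purely algebraic and the rightmost is the substantive Robertson estimate. Setting $a\coloneqq\Delta_h(A)$ and $b\coloneqq\Delta_h(B)$, both $\tfrac12(a^2+b^2)\ge\tfrac14(a+b)^2$ and $\tfrac14(a+b)^2\ge ab$ become, after clearing denominators, the single inequality $(a-b)^2\ge0$; so no operator theory enters there and the whole weight of the theorem sits in $ab\ge\tfrac12|\langle[A,B]h,h\rangle|$.

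For that inequality the first step is to pass to the centered operators $A_0\coloneqq A-\langle Ah,h\rangle I$ and $B_0\coloneqq B-\langle Bh,h\rangle I$. These are self-adjoint on the same domains as $A,B$; one has $\Delta_h(A)=\|A_0h\|$ and $\Delta_h(B)=\|B_0h\|$ straight from the definition; and since the subtracted scalar multiples of $I$ commute with everything, $[A_0,B_0]h=[A,B]h$ for every $h\in\mathcal{D}(AB)\cap\mathcal{D}(BA)$. The second step is the Cauchy--Schwarz inequality in $\mathcal H$, $\|A_0h\|\,\|B_0h\|\ge|\langle A_0h,B_0h\rangle|\ge|\operatorname{Im}\langle A_0h,B_0h\rangle|$. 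The third step identifies this imaginary part with the commutator: because $h$, $A_0h$ and $B_0h$ all lie in $\mathcal{D}(A_0)\cap\mathcal{D}(B_0)$ and $A_0,B_0$ are self-adjoint, $\langle A_0h,B_0h\rangle=\langle h,A_0B_0h\rangle$ and $\langle B_0h,A_0h\rangle=\langle h,B_0A_0h\rangle$, so that $2i\operatorname{Im}\langle A_0h,B_0h\rangle=\langle A_0h,B_0h\rangle-\langle B_0h,A_0h\rangle=\langle h,[A_0,B_0]h\rangle=\langle h,[A,B]h\rangle$; taking absolute values and using $|\langle[A,B]h,h\rangle|=|\langle h,[A,B]h\rangle|$ gives $|\langle[A,B]h,h\rangle|=2|\operatorname{Im}\langle A_0h,B_0h\rangle|$. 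Chaining the three steps produces $\Delta_h(A)\Delta_h(B)=\|A_0h\|\,\|B_0h\|\ge\tfrac12|\langle[A,B]h,h\rangle|$, completing the proof.

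I expect the only genuine care-point to be the domain bookkeeping in the unbounded case: one must verify that $h\in\mathcal{D}(AB)\cap\mathcal{D}(BA)$ forces $A_0h\in\mathcal{D}(B_0)$ and $B_0h\in\mathcal{D}(A_0)$, so that the products $A_0B_0h$, $B_0A_0h$ make sense and the self-adjointness identities $\langle A_0u,v\rangle=\langle u,A_0v\rangle$ are only ever applied to vectors in the correct domains; once that is checked, every displayed relation is either an equality or a single invocation of Cauchy--Schwarz. (It is worth recording in passing that $[A,B]$ is skew-adjoint, so $\langle[A,B]h,h\rangle$ is purely imaginary and the two orderings of its arguments differ only by a sign, which is why discarding $\operatorname{Re}\langle A_0h,B_0h\rangle$ costs nothing in the stated bound; retaining it instead, via $\operatorname{Re}\langle A_0h,B_0h\rangle=\tfrac12\langle\{A,B\}h,h\rangle-\langle Ah,h\rangle\langle Bh,h\rangle$, would give the sharper Schr\"odinger refinement, but that is not needed here.)
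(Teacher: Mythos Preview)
Your proof is correct and is the classical textbook argument: center the operators, apply Cauchy--Schwarz, and identify the imaginary part of $\langle A_0h,B_0h\rangle$ with the commutator via self-adjointness; the two leftmost inequalities reduce, as you say, to $(a-b)^2\ge0$. The domain bookkeeping you flag is also handled correctly, since $h\in\mathcal{D}(AB)\cap\mathcal{D}(BA)$ forces $h,Ah,Bh\in\mathcal{D}(A)\cap\mathcal{D}(B)$ and hence $A_0h\in\mathcal{D}(B_0)$, $B_0h\in\mathcal{D}(A_0)$.

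The paper, however, does not supply a direct proof of this theorem: Theorem~\ref{HRT} is stated as background with citations to the literature, and no argument accompanies it. The nearest thing is the later corollary showing that the stronger Schr\"odinger inequality (Theorem~\ref{HRS}) can be recovered from the nonlinear principle (Theorem~\ref{NHRS}) by specializing $\mathcal{X}=\mathcal{H}$ and $f=\langle\,\cdot\,,h\rangle$; Theorem~\ref{HRT} then follows from Theorem~\ref{HRS} by discarding the anticommutator contribution. So your self-contained argument is filling in what the paper deliberately left to the references. Compared with the paper's indirect route through the Banach/Lipschitz framework, yours is more elementary and needs no auxiliary objects, while the paper's route buys a genuine Banach-space generalization at the price of introducing the second uncertainty $\nabla(f,A,x)$ alongside $\Delta(B,x,f)$.
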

In 1930,  Schrodinger improved Inequality (\ref{HR}) \cite{SCHRODINGER} (English translation of 1930 original article by Schrodinger). 
\begin{theorem} \cite{SCHRODINGER}
	(\textbf{Heisenberg-Robertson-Schrodinger  Uncertainty Principle}) \label{HRS}
	Let  $A:	\mathcal{D}(A)\to  \mathcal{H}$ and $B:	\mathcal{D}(B)\to  \mathcal{H}$  be self-adjoint operators. Then for all $h \in \mathcal{D}(AB)\cap  \mathcal{D}(BA)$ with $\|h\|=1$, we have 
	\begin{align*}
		\Delta _h(A)	\Delta _h(B)    \geq |\langle Ah, Bh \rangle-\langle Ah, h \rangle \langle Bh, h \rangle|
		=\frac{\sqrt{|\langle [A,B]h, h \rangle |^2+|\langle \{A,B\}h, h \rangle -2\langle Ah, h \rangle\langle Bh, h \rangle|^2}}{2}.
	\end{align*}	
\end{theorem}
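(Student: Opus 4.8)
The plan is to run the Cauchy--Schwarz argument underlying Robertson's original proof, but to stop before discarding the real part of the relevant inner product. First I would set $f \coloneqq Ah - \langle Ah, h\rangle h$ and $g \coloneqq Bh - \langle Bh, h\rangle h$, so that by definition $\Delta_h(A) = \|f\|$ and $\Delta_h(B) = \|g\|$. Since $A$ and $B$ are self-adjoint and $\|h\| = 1$, the scalars $\langle Ah, h\rangle$ and $\langle Bh, h\rangle$ are real; expanding $\langle f, g\rangle$ and using $\langle h, Bh\rangle = \overline{\langle Bh, h\rangle} = \langle Bh, h\rangle$, the cross terms collapse and one gets $\langle f, g\rangle = \langle Ah, Bh\rangle - \langle Ah, h\rangle\langle Bh, h\rangle$. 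The Cauchy--Schwarz inequality $\|f\|\,\|g\| \geq |\langle f, g\rangle|$ then gives the first asserted inequality $\Delta_h(A)\,\Delta_h(B) \geq |\langle Ah, Bh\rangle - \langle Ah, h\rangle\langle Bh, h\rangle|$.

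For the claimed identity, write $z \coloneqq \langle Ah, Bh\rangle - \langle Ah, h\rangle\langle Bh, h\rangle$ and decompose $|z|^2 = (\operatorname{Re} z)^2 + (\operatorname{Im} z)^2$. The key is to match these two pieces with the commutator and anticommutator terms. Using self-adjointness and $h \in \mathcal{D}(AB)\cap\mathcal{D}(BA)$, one computes $\langle [A,B]h, h\rangle = \langle ABh, h\rangle - \langle BAh, h\rangle = \langle Bh, Ah\rangle - \langle Ah, Bh\rangle = \overline{\langle Ah, Bh\rangle} - \langle Ah, Bh\rangle = -2i\operatorname{Im}\langle Ah, Bh\rangle$, and likewise $\langle \{A,B\}h, h\rangle = 2\operatorname{Re}\langle Ah, Bh\rangle$. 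Since $\langle Ah, h\rangle\langle Bh, h\rangle$ is real, $\operatorname{Im} z = \operatorname{Im}\langle Ah, Bh\rangle$ and $\operatorname{Re} z = \operatorname{Re}\langle Ah, Bh\rangle - \langle Ah, h\rangle\langle Bh, h\rangle$, so $|\langle[A,B]h,h\rangle| = 2|\operatorname{Im} z|$ and $|\langle\{A,B\}h,h\rangle - 2\langle Ah,h\rangle\langle Bh,h\rangle| = 2|\operatorname{Re} z|$. Substituting into the right-hand side yields $\tfrac12\sqrt{4(\operatorname{Im} z)^2 + 4(\operatorname{Re} z)^2} = |z|$, which is exactly the asserted equality.

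I do not expect a serious obstacle: the argument is elementary once one resists the temptation (made in Robertson's proof) to bound $|\langle f, g\rangle|$ from below by $|\operatorname{Im}\langle f, g\rangle|$. The only points needing care are bookkeeping ones: ensuring that all vectors lie in the appropriate domains so that $\langle ABh, h\rangle = \langle Bh, Ah\rangle$ is legitimate (this is where the hypothesis $h \in \mathcal{D}(AB)\cap\mathcal{D}(BA)$ together with self-adjointness of $A$ and $B$ is used), and keeping track of which scalars are real. As a sanity check one recovers the chain in Theorem \ref{HRT} by the further bound $|z| \geq |\operatorname{Im} z| = \tfrac12|\langle[A,B]h,h\rangle|$.
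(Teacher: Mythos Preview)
Your argument is correct: the Cauchy--Schwarz step gives the inequality, and your real/imaginary bookkeeping cleanly yields the commutator/anticommutator identity.

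The paper, however, does not prove Theorem~\ref{HRS} directly. Its proof is Corollary~2.7: one first establishes the nonlinear principle (Theorem~\ref{NHRS}) via the Lipschitz bound $|g(y)|\le\|g\|_{\text{Lip}_0}\|y\|$ applied to $g=fA-f(Ax)f$ and $y=Bx-f(Bx)x$, and then specialises to a Hilbert space by taking $f=\langle\,\cdot\,,h\rangle$. Under this specialisation the Lipschitz bound \emph{is} Cauchy--Schwarz, so at bottom the two routes coincide; what you gain by going straight to Cauchy--Schwarz is brevity, while the paper's detour through Theorem~\ref{NHRS} is precisely its point, namely that the Hilbert-space statement is an instance of a Banach/Lipschitz inequality. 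One further remark: your write-up also supplies the identity with the $[A,B]$ and $\{A,B\}$ terms, which the paper's Corollary~2.7 does not verify explicitly (it stops at $|\langle Ah,Bh\rangle-\langle Ah,h\rangle\langle Bh,h\rangle|$), so in that respect your argument is more complete.
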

Theorem \ref{HRS} promotes the following question.
\begin{question}\label{Q}
	What is the nonlinear (even Banach space) version of Theorem \ref{HRS}?
\end{question}
  In this short note,  we answer Question \ref{Q} by deriving an uncertainty principle for Lipschitz maps acting on subsets of Banach spaces.  We note that there is a Banach space version of uncertainty principle by Goh and Goodman \cite{GOHGOODMAN}  which differs from the results in this paper. It is interesting to note that the uncertainty principle of Game theory, derived by Sz\'{e}kely and Rizzo is nonlinear \cite{SZEKELYRIZZO}.

\section{Nonlinear Heisenberg-Robertson-Schrodinger Uncertainty Principle}

Let $\mathcal{X}$ be a Banach space and  $\mathcal{M}\subseteq  \mathcal{X}$ be a subset such that $0 \in \mathcal{M}$.  Recall that  the collection of all Lipschitz functions $f:  \mathcal{M}\to \mathbb{C}$  satisfying  $f(0)=0$, denoted by $ \mathcal{M}^\# $ is a Banach space \cite{WEAVER} w.r.t. the Lipschitz norm 
\begin{align*}
	\|f\|_{\text{Lip}_0(\mathcal{M}, \mathbb{C})}\coloneqq \sup_{x,y \in \mathcal{M}, x \neq y} \frac{|f(x)-f(y)|}{\|x-y\|}.
\end{align*}
Let  $\mathcal{M}\subseteq  \mathcal{X}$ be a subset such that $0 \in \mathcal{M}$. Let $A:\mathcal{M}\to  \mathcal{X}$ be a Lipschitz map such that $A(0)=0$.
Given $x\in \mathcal{M}$ and $f \in  \mathcal{X}^\# $ satisfying $f(x)=1$, we define \textbf{two  uncertainties} of  $A$ at $(x,f) \in \mathcal{M}\times  \mathcal{X}^\# $ as 
\begin{align*}
	&\Delta (A, x, f)\coloneqq \|Ax-f(Ax) x\|,\\
	&\nabla(f, A,x)\coloneqq \|fA-f(Ax) f\|_{\text{Lip}_0(\mathcal{M}, \mathbb{C})}.
\end{align*}
\begin{theorem} (\textbf{Nonlinear Heisenberg-Robertson-Schrodinger Uncertainty Principle}) \label{NHRS}
	Let  $\mathcal{X}$ be a Banach space and  $\mathcal{M}, \mathcal{N}\subseteq  \mathcal{X}$ be  subsets such that $0 \in \mathcal{M}\cap \mathcal{N}$. Let $A:\mathcal{M}\to  \mathcal{X}$, $B:\mathcal{N}\to  \mathcal{X}$ be  Lipschitz maps such that $A(0)=B(0)=0$.  Then  for all $x\in \mathcal{M}\cap \mathcal{N}$ and $f \in  \mathcal{X}^\# $ satisfying $f(x)=1$, we have 
	\begin{align*}
 \frac{1}{2} \left(\nabla(f, A,x)^2+\Delta (B, x, f)^2\right)&\geq \frac{1}{4} \left(\nabla(f, A,x)+	\Delta (B, x, f)\right)^2\\
 & \geq \nabla(f, A,x)\Delta (B, x, f)\\
 &\geq |f(ABx)-f(Ax)f(Bx)|.
	\end{align*}
\end{theorem}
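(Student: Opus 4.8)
The plan is to dispose of the first two inequalities by a one-line convexity estimate and to concentrate on the third, which carries the content. For nonnegative reals $a,b$ one has $\tfrac12(a^2+b^2)\ge\tfrac14(a+b)^2\ge ab$, both being rephrasings of $(a-b)^2\ge 0$; taking $a=\nabla(f,A,x)$ and $b=\Delta(B,x,f)$ settles the two leftmost inequalities, so it remains to prove $\nabla(f,A,x)\,\Delta(B,x,f)\ge|f(ABx)-f(Ax)f(Bx)|$. The idea here is to transcribe, into the language of Lipschitz duality, the Hilbert-space proof of Theorem~\ref{HRS}, which applies the Cauchy--Schwarz inequality to the uncertainty vectors $Ah-\langle Ah,h\rangle h$ and $Bh-\langle Bh,h\rangle h$.

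First I would introduce the scalar function $g\coloneqq fA-f(Ax)f$ on $\mathcal M$. It is Lipschitz, being assembled from the Lipschitz maps $f$ and $A$ by composition, scalar multiplication and subtraction; it has the correct basepoint value, $g(0)=f(A0)-f(Ax)f(0)=0$; and by the very definition of the second uncertainty, $\|g\|_{\text{Lip}_0}=\nabla(f,A,x)$. I would then record the two evaluations the argument relies on, the first of which uses $f(x)=1$:
\[
g(x)=f(Ax)-f(Ax)f(x)=f(Ax)(1-f(x))=0,\qquad g(Bx)=f(A(Bx))-f(Ax)f(Bx)=f(ABx)-f(Ax)f(Bx),
\]
the right-hand side of the second being exactly the quantity to be bounded.

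Next I would apply the defining Lipschitz estimate for $g$ to the pair of points $Bx$ and $f(Bx)x$ (I need both to lie in $\mathcal M$; that $Bx\in\mathcal M$ is already forced by the appearance of $f(ABx)$ in the statement). Since $\|Bx-f(Bx)x\|$ is by definition $\Delta(B,x,f)$, this gives
\[
|g(Bx)-g(f(Bx)x)|\ \le\ \|g\|_{\text{Lip}_0}\,\|Bx-f(Bx)x\|\ =\ \nabla(f,A,x)\,\Delta(B,x,f).
\]
Combining this with $g(Bx)=f(ABx)-f(Ax)f(Bx)$, the theorem follows as soon as one shows that the correction term $g(f(Bx)x)=f(A(f(Bx)x))-f(Ax)\,f(f(Bx)x)$ does not weaken the estimate, i.e.\ that $|g(Bx)-g(f(Bx)x)|$ still dominates $|g(Bx)|$.

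Handling that correction term is the step I expect to be the main obstacle. In the Hilbert model (where $x=h$ and $f=\langle\cdot,h\rangle$) it vanishes automatically: there $g$ is a bounded linear functional that annihilates the line $\mathbb{C}h$ — in particular $f(Bx)x=\langle Bh,h\rangle h$, the orthogonal projection of $Bh$ onto $\mathbb{C}h$ — so $g(Bx)-g(f(Bx)x)=g(Bx)$ and the displayed inequality is exactly Theorem~\ref{HRS}. The work in the present, nonlinear setting is thus to pin down the corresponding behaviour of $g$ along $\mathbb{C}x$ from the hypotheses in force — the normalisation $f(x)=1$ together with $g(0)=g(x)=0$ — and thereby to justify replacing $g(Bx)-g(f(Bx)x)$ by $g(Bx)$; once that is available, the entire chain of inequalities in the statement is complete.
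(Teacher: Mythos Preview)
Your argument tracks the paper's almost exactly: set $g=fA-f(Ax)f$ and bound $|g|$ via its Lipschitz norm times the distance $\|Bx-f(Bx)x\|=\Delta(B,x,f)$. The one difference is the pair of points you feed into the Lipschitz estimate. You use $(Bx,\,f(Bx)x)$, which makes $g(Bx)=f(ABx)-f(Ax)f(Bx)$ immediate but leaves the correction $g(f(Bx)x)$ to be dealt with. The paper instead uses the pair $(Bx-f(Bx)x,\,0)$: since $g(0)=0$, the bounded quantity is $|g(Bx-f(Bx)x)|$, which the paper then \emph{expands linearly} as
\[
f(ABx)-f(Bx)f(Ax)-f(Ax)f(Bx)+f(Ax)f(Bx)f(x)
\]
and collapses using $f(x)=1$.

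So the ``main obstacle'' you anticipate --- controlling $g$ along the line $\mathbb{C}x$ --- is precisely the place where the paper invokes additivity and homogeneity of $f$ and $A$ to obtain the four-term expansion; no separate device is introduced. You have not missed an idea: to match the paper's proof, move your Lipschitz estimate to the single point $Bx-f(Bx)x$ and carry out the same expansion. Your observation that this step is delicate for genuinely nonlinear $A$ and $f$ is accurate, but the paper does not address it beyond the formal calculation.
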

\begin{proof}
	\begin{align*}
	\nabla(f, A,x)\Delta (B, x, f)&=\|fA-f(Ax) f\|_{\text{Lip}_0(\mathcal{M}, \mathbb{C})} \|Bx-f(Bx) x\|\\
	&\geq |[fA-f(Ax) f][Bx-f(Bx) x]|\\
	&=|f(ABx)-f(Bx)f(Ax)-f(Ax)f(Bx)+f(Ax)f(Bx)f(x)|\\
	&=|f(ABx)-f(Bx)f(Ax)-f(Ax)f(Bx)+f(Ax)f(Bx)\cdot 1|\\
	&=|f(ABx)-f(Ax)f(Bx)|.
\end{align*}	
\end{proof}
\begin{corollary}
	Let  $\mathcal{X}$ be a Banach space and  $\mathcal{M}, \mathcal{N}\subseteq  \mathcal{X}$ be  subsets such that $0 \in \mathcal{M}\cap \mathcal{N}$. Let $A:\mathcal{M}\to  \mathcal{X}$, $B:\mathcal{N}\to  \mathcal{X}$ be  Lipschitz maps such that $A(0)=B(0)=0$.  Then  for all $x\in \mathcal{M}\cap \mathcal{N}$ and $f \in  \mathcal{X}^\# $ satisfying $f(x)=1$, we have 	
	\begin{align*}
		\nabla(f, A,x)\Delta (B, x, f)+\|fB\|_{\text{Lip}_0(\mathcal{N}, \mathbb{C})}\Delta (A, x, f)\geq |f([A,B]x)|.
	\end{align*}
\end{corollary}
\begin{proof}
	Note that 
	\begin{align*}
\nabla(f, A,x)\Delta (B, x, f)&\geq	|f(ABx)-f(Ax)f(Bx)|\\
&=|f(ABx-BAx)+f(BAx)-f(Ax)f(Bx)|\\
	&=|f([A,B]x)+(fB)[Ax-f(Ax)x]|\\
	&\geq |f([A,B]x)|-|(fB)[Ax-f(Ax)x]|\\
	&\geq |f([A,B]x)|-\|fB\|_{\text{Lip}_0(\mathcal{N}, \mathbb{C})}\|Ax-f(Ax)x\|\\
	&=|f([A,B]x)|-\|fB\|_{\text{Lip}_0(\mathcal{N}, \mathbb{C})}\Delta (A, x, f).
	\end{align*}
\end{proof}
\begin{corollary}
	Let  $\mathcal{X}$ be a Banach space and  $\mathcal{M}, \mathcal{N}\subseteq  \mathcal{X}$ be  subsets such that $0 \in \mathcal{M}\cap \mathcal{N}$. Let $A:\mathcal{M}\to  \mathcal{X}$, $B:\mathcal{N}\to  \mathcal{X}$ be  Lipschitz maps such that $A(0)=B(0)=0$.  Then  for all $x\in \mathcal{M}\cap \mathcal{N}$ and $f \in  \mathcal{X}^\# $ satisfying $f(x)=1$, we have 	
	\begin{align*}
			\nabla(f, A,x)\Delta (B, x, f)+\|fB\|_{\text{Lip}_0(\mathcal{N}, \mathbb{C})}\Delta (A, x, -f)\geq |f(\{A,B\}x)|.
	\end{align*}
\end{corollary}
\begin{proof}
	Note that 
\begin{align*}
	\nabla(f, A,x)\Delta (B, x, f)&\geq	|f(ABx)-f(Ax)f(Bx)|\\
	&=|f(ABx+BAx)-f(BAx)-f(Ax)f(Bx)|\\
	&=|f(\{A,B\}x)-(fB)[Ax+f(Ax)x]|\\
	&\geq |f(\{A,B\}x)|-|(fB)[Ax+f(Ax)x]|\\
	&\geq |f(\{A,B\}x)|-\|fB\|_{\text{Lip}_0(\mathcal{N}, \mathbb{C})}\|Ax-(-f)(Ax)x\|\\
	&=|f(\{A,B\}x)|-\|fB\|_{\text{Lip}_0(\mathcal{N}, \mathbb{C})}\Delta (A, x, -f).
\end{align*}	
\end{proof}
\begin{corollary} (\textbf{Functional Heisenberg-Robertson-Schrodinger Uncertainty Principle})
	 	Let  $\mathcal{X}$ be a Banach space with dual $\mathcal{X}^*$,  $A:	\mathcal{D}(A)\to  \mathcal{X}$ and $B:	\mathcal{D}(B)\to  \mathcal{X}$  be linear  operators.  	Then  for all $x \in \mathcal{D}(AB)\cap  \mathcal{D}(BA)$ and $f \in  \mathcal{X}^*$ satisfying $f(x)=1$, we have 
	 	\begin{align*}
	 		 \frac{1}{2} \left(\nabla(f, A,x)^2+\Delta (B, x, f)^2\right)&\geq \frac{1}{4} \left(\nabla(f, A,x)+	\Delta (B, x, f)\right)^2\\
	 		&\geq \nabla(f, A,x)\Delta (B, x, f)\\
	 		&\geq |f(ABx)-f(Ax)f(Bx)|.
	 	\end{align*}
\end{corollary}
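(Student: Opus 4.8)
The plan is to obtain the statement as a direct specialization of Theorem \ref{NHRS}, so the only real content is verifying that the linear setting is a genuine instance of the Lipschitz one. First I would record two elementary facts. (i) Every $f\in\mathcal{X}^*$ is Lipschitz with $f(0)=0$ and $\|f\|_{\text{Lip}_0}=\|f\|_{\mathcal{X}^*}$, so $\mathcal{X}^*\subseteq\mathcal{X}^\#$; in particular any $f\in\mathcal{X}^*$ with $f(x)=1$ is an admissible choice in Theorem \ref{NHRS}. (ii) The domains $\mathcal{D}(A)$ and $\mathcal{D}(B)$ are linear subspaces, hence contain $0$, and linearity forces $A(0)=B(0)=0$; moreover $\mathcal{D}(AB)\cap\mathcal{D}(BA)\subseteq\mathcal{D}(A)\cap\mathcal{D}(B)$. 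So, putting $\mathcal{M}=\mathcal{D}(A)$ and $\mathcal{N}=\mathcal{D}(B)$, the structural hypotheses of Theorem \ref{NHRS} are met for every $x\in\mathcal{D}(AB)\cap\mathcal{D}(BA)$.

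Next I would dispose of the two leftmost inequalities, which have nothing to do with linearity: for nonnegative reals $a,b$ (with the obvious conventions if one is infinite) we have $\tfrac12(a^2+b^2)\ge\tfrac14(a+b)^2\ge ab$, both being restatements of $(a-b)^2\ge0$; specializing to $a=\nabla(f,A,x)$ and $b=\Delta(B,x,f)$ gives the first two estimates. It thus remains only to establish $\nabla(f,A,x)\,\Delta(B,x,f)\ge|f(ABx)-f(Ax)f(Bx)|$.

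For this I would repeat, essentially verbatim, the computation in the proof of Theorem \ref{NHRS}. Put $g\coloneqq fA-f(Ax)f$, a linear functional on $\mathcal{D}(A)$, and $v\coloneqq Bx-f(Bx)x$. Since $x\in\mathcal{D}(AB)$ gives $Bx\in\mathcal{D}(A)$ and $x\in\mathcal{D}(BA)$ gives $x\in\mathcal{D}(A)$, the vector $v$ lies in $\mathcal{D}(A)$, so $g(v)$ is defined; expanding it and using $f(x)=1$ exactly as in Theorem \ref{NHRS} yields $g(v)=f(ABx)-f(Ax)f(Bx)$, while by definition $\|v\|=\Delta(B,x,f)$ and $\|g\|_{\text{Lip}_0}=\nabla(f,A,x)$. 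If $\nabla(f,A,x)=+\infty$ the inequality is trivial; otherwise, since $v$ and $0$ both lie in the set on which $g$ acts, the Lipschitz estimate gives $|f(ABx)-f(Ax)f(Bx)|=|g(v)-g(0)|\le\|g\|_{\text{Lip}_0}\,\|v-0\|=\nabla(f,A,x)\,\Delta(B,x,f)$.

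I expect the one point that needs care — and it is the only one — to be the mismatch between Theorem \ref{NHRS}, which is phrased for Lipschitz (hence bounded) maps, and the present corollary, which allows $A$ and $B$ to be unbounded; but this is a mild obstacle, since the quantities that might be infinite sit on the larger side of the inequality, making it vacuously true in precisely the problematic case, while in the remaining case the functional $g=fA-f(Ax)f$ is automatically bounded on its domain and the argument above applies unchanged.
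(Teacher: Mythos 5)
Your proposal is correct and follows the paper's intended route: the corollary is stated there without proof as a direct specialization of Theorem \ref{NHRS}, and your argument is exactly that specialization, repeating the theorem's computation verbatim for linear $A$, $B$ and $f\in\mathcal{X}^*\subseteq\mathcal{X}^\#$. Your added care is in fact what makes the deduction rigorous, since an unbounded linear operator is not Lipschitz on its domain, so Theorem \ref{NHRS} does not literally apply; checking that $Bx-f(Bx)x\in\mathcal{D}(A)$, that linearity justifies the expansion of $g(v)=f(ABx)-f(Ax)f(Bx)$, and that the possibly infinite quantity $\nabla(f,A,x)$ sits on the larger side closes the only gap in reading it off from the theorem.
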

\begin{corollary}
	Let  $\mathcal{X}$ be a Banach space,  $A:	\mathcal{D}(A)\to  \mathcal{X}$ and $B:	\mathcal{D}(B)\to  \mathcal{X}$  be linear  operators.  	Then  for all $x \in \mathcal{D}(AB)\cap  \mathcal{D}(BA)$ and $f \in  \mathcal{X}^*$ satisfying $f(x)=1$, we have 
\begin{align*}
\nabla(f, A,x)\Delta (B, x, f)+\|fB\|\Delta (A, x, f)\geq |f([A,B]x)|.
\end{align*}	
\end{corollary}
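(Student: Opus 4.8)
The plan is to deduce this directly from the Functional Heisenberg--Robertson--Schrodinger Uncertainty Principle established just above, mirroring word for word the derivation of the earlier corollary $\nabla(f,A,x)\Delta(B,x,f)+\|fB\|_{\text{Lip}_0}\Delta(A,x,f)\ge|f([A,B]x)|$ from Theorem \ref{NHRS}. The only genuinely new ingredient is the observation that the Lipschitz norm of a linear functional equals its operator norm, so that the term $\|fB\|_{\text{Lip}_0}$ appearing in the Lipschitz corollary becomes simply $\|fB\|$ here.

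Concretely, I would fix $x\in\mathcal{D}(AB)\cap\mathcal{D}(BA)$ and $f\in\mathcal{X}^*$ with $f(x)=1$, and start from the Functional HRS bound
\[
\nabla(f,A,x)\,\Delta(B,x,f)\;\ge\;\bigl|f(ABx)-f(Ax)f(Bx)\bigr|.
\]
Since $x$ lies in both $\mathcal{D}(AB)$ and $\mathcal{D}(BA)$, the vectors $Bx$, $Ax$, $ABx$, $BAx$ all make sense, and $fB:=f\circ B$ is a well-defined linear functional on $\mathcal{D}(B)$. Inserting and removing $f(BAx)$ and using linearity of $f$ and of $fB$,
\[
f(ABx)-f(Ax)f(Bx)\;=\;f([A,B]x)+f(BAx)-f(Ax)f(Bx)\;=\;f([A,B]x)+(fB)\bigl(Ax-f(Ax)x\bigr).
\]
Then the reverse triangle inequality $|a+b|\ge|a|-|b|$ together with the operator-norm bound $\bigl|(fB)(v)\bigr|\le\|fB\|\,\|v\|$ gives
\[
\nabla(f,A,x)\,\Delta(B,x,f)\;\ge\;\bigl|f([A,B]x)\bigr|-\|fB\|\,\bigl\|Ax-f(Ax)x\bigr\|\;=\;\bigl|f([A,B]x)\bigr|-\|fB\|\,\Delta(A,x,f),
\]
and rearranging yields the claim.

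The argument presents no real obstacle; the only points requiring a line of care are the bookkeeping for possibly unbounded $A$ and $B$ (all compositions written above are legitimate precisely because $x\in\mathcal{D}(AB)\cap\mathcal{D}(BA)$) and the interpretation of $\nabla(f,A,x)$ and $\|fB\|$ as elements of $[0,\infty]$, so that the stated inequality is vacuously true whenever either quantity is infinite. Otherwise every step is an identity of scalars or one of the two elementary inequalities already invoked in the Lipschitz corollary, so nothing new beyond the Functional HRS inequality is needed.
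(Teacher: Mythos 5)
Your proposal is correct and is essentially the paper's own route: the paper states this corollary without proof, as an immediate specialization of the Lipschitz corollary whose proof uses exactly your decomposition $f(ABx)-f(Ax)f(Bx)=f([A,B]x)+(fB)(Ax-f(Ax)x)$ followed by the reverse triangle inequality and the norm bound on $fB$. Your added remarks on domains and on reading $\|fB\|$ in $[0,\infty]$ for unbounded $B$ are sensible housekeeping but do not change the argument.
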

\begin{corollary}
Let  $\mathcal{X}$ be a Banach space,  $A:	\mathcal{D}(A)\to  \mathcal{X}$ and $B:	\mathcal{D}(B)\to  \mathcal{X}$  be linear  operators.  	Then  for all $x \in \mathcal{D}(AB)\cap  \mathcal{D}(BA)$ and $f \in  \mathcal{X}^*$ satisfying $f(x)=1$, we have 
\begin{align*}
	\nabla(f, A,x)\Delta (B, x, f)+\|fB\|\Delta (A, -x, f)&=\nabla(f, A,x)\Delta (B, x, f)+\|fB\|\Delta (A, x, -f)\\
	&\geq |f(\{A,B\}x)|.	
\end{align*}
\end{corollary}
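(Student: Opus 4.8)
The plan is to deduce this corollary from the Functional Heisenberg-Robertson-Schrodinger Uncertainty Principle stated above (which in turn is the linear specialisation of Theorem \ref{NHRS}), by the same manoeuvre used in the analogous corollaries already recorded: rewrite the mixed term $f(ABx)-f(Ax)f(Bx)$ in terms of the anticommutator $\{A,B\}$, and then dispose of the displayed equality by a one-line sign computation.

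First I would fix the algebra. Since $x\in\mathcal{D}(AB)\cap\mathcal{D}(BA)$, the vectors $ABx$ and $BAx$ are defined, $Ax\in\mathcal{D}(B)$, and, $\mathcal{D}(B)$ being a subspace containing $x$ and $Ax$, also $Ax+f(Ax)x\in\mathcal{D}(B)$; linearity of $f$ together with $f(x)=1$ then gives
\[
 f(ABx)-f(Ax)f(Bx)=f(ABx+BAx)-f(BAx)-f(Ax)f(Bx)=f(\{A,B\}x)-(fB)\bigl[Ax+f(Ax)x\bigr],
\]
where the last equality uses $(fB)[Ax+f(Ax)x]=(fB)(Ax)+f(Ax)(fB)(x)=f(BAx)+f(Ax)f(Bx)$, legitimate because $fB$ is linear on $\mathcal{D}(B)$. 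By the Functional Heisenberg-Robertson-Schrodinger Uncertainty Principle, $\nabla(f,A,x)\Delta(B,x,f)\ge|f(ABx)-f(Ax)f(Bx)|$; combining this with the identity and the triangle inequality gives
\[
 \nabla(f,A,x)\Delta(B,x,f)\ \ge\ \bigl|f(\{A,B\}x)\bigr|-\bigl|(fB)[Ax+f(Ax)x]\bigr|\ \ge\ \bigl|f(\{A,B\}x)\bigr|-\|fB\|\,\bigl\|Ax+f(Ax)x\bigr\|,
\]
the last step being $|g(v)|\le\|g\|\,\|v\|$ for the linear functional $g=fB$ at $v=Ax+f(Ax)x\in\mathcal{D}(B)$, with $\|fB\|$ the operator norm (if $fB$ is unbounded the asserted inequality is trivially true). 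Rearranging, $\nabla(f,A,x)\Delta(B,x,f)+\|fB\|\,\|Ax+f(Ax)x\|\ge|f(\{A,B\}x)|$.

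It remains to identify $\|Ax+f(Ax)x\|$ with both $\Delta(A,x,-f)$ and $\Delta(A,-x,f)$. From the definition, $\Delta(A,x,-f)=\|Ax-(-f)(Ax)\,x\|=\|Ax+f(Ax)x\|$. For the other, linearity of $A$ and $f$ gives $A(-x)=-Ax$ and $f(A(-x))=-f(Ax)$, so $A(-x)-f(A(-x))(-x)=-Ax-f(Ax)x$, whence $\Delta(A,-x,f)=\|{-Ax-f(Ax)x}\|=\|Ax+f(Ax)x\|$. Substituting both expressions into the inequality of the previous paragraph yields the full chain claimed in the corollary.

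I do not anticipate a genuine obstacle: the statement is the linear-operator, anticommutator counterpart of a corollary whose proof is already in hand, and the only points needing care are the bookkeeping of domains (so that every composition and linear combination appearing is defined and lies where it must), the observation that for the linear functional $fB$ the Lipschitz norm $\|\cdot\|_{\text{Lip}_0}$ coincides with the operator norm $\|fB\|$, and using the normalisation $f(x)=1$ at precisely the spot where the two cross terms cancel.
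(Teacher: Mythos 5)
Your proposal is correct and follows essentially the paper's own route: the paper leaves this corollary unproved as the linear specialization of its nonlinear anticommutator corollary, whose proof uses exactly your decomposition $f(ABx)-f(Ax)f(Bx)=f(\{A,B\}x)-(fB)[Ax+f(Ax)x]$ followed by the triangle inequality and $|(fB)(v)|\leq\|fB\|\,\|v\|$. Your added checks — that the algebra is legitimate because $f$, $A$, $B$ are linear, that the domains cooperate, and the one-line verification $\Delta(A,-x,f)=\|Ax+f(Ax)x\|=\Delta(A,x,-f)$ giving the stated equality — are exactly the details the paper asserts without comment.
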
	
\begin{corollary}
Theorem \ref{HRS} follows from Theorem \ref{NHRS}.
\end{corollary}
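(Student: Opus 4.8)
The plan is to apply the Functional Heisenberg-Robertson-Schrodinger Uncertainty Principle established above (itself a consequence of Theorem~\ref{NHRS}) to the complex Hilbert space $\mathcal{H}$ of Theorem~\ref{HRS}, regarded as a Banach space $\mathcal{X}=\mathcal{H}$, with $A,B$ the given self-adjoint operators, with $x=h$, and with the functional $f$ taken to be the Riesz functional $f:=\langle\,\cdot\,,h\rangle\in\mathcal{H}^{*}$. Since $\|h\|=1$, this $f$ is a nonzero bounded linear functional with $f(h)=\langle h,h\rangle=1$, so the hypotheses of that corollary are met; moreover $h\in\mathcal{D}(BA)$ forces $h\in\mathcal{D}(A)$ and $h\in\mathcal{D}(AB)$ forces $Bh\in\mathcal{D}(A)$, so every expression below is defined.

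Next I would identify the three quantities in the conclusion of that corollary with their Hilbert-space counterparts. Straight from the definitions, $\Delta(B,h,f)=\|Bh-f(Bh)h\|=\|Bh-\langle Bh,h\rangle h\|=\Delta_{h}(B)$. For $\nabla(f,A,h)$, the key observation is that self-adjointness of $A$ gives $(fA)(y)=\langle Ay,h\rangle=\langle y,Ah\rangle$ for $y\in\mathcal{D}(A)$, so, using that $\langle Ah,h\rangle$ is real, one obtains the identity of bounded linear functionals $fA-f(Ah)f=\langle\,\cdot\,,\,Ah-\langle Ah,h\rangle h\,\rangle$; since the $\mathrm{Lip}_0$-norm of a linear functional equals its dual norm, which equals the norm of its Riesz representative, $\nabla(f,A,h)=\|Ah-\langle Ah,h\rangle h\|=\Delta_{h}(A)$. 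Finally $|f(ABh)-f(Ah)f(Bh)|=|\langle ABh,h\rangle-\langle Ah,h\rangle\langle Bh,h\rangle|$, and since $\langle ABh,h\rangle=\langle Bh,Ah\rangle=\overline{\langle Ah,Bh\rangle}$ while $\langle Ah,h\rangle\langle Bh,h\rangle\in\mathbb{R}$, conjugation leaves the modulus unchanged and this equals $|\langle Ah,Bh\rangle-\langle Ah,h\rangle\langle Bh,h\rangle|$. Substituting these three identities into the chain of the corollary yields $\tfrac12(\Delta_{h}(A)^{2}+\Delta_{h}(B)^{2})\ge\tfrac14(\Delta_{h}(A)+\Delta_{h}(B))^{2}\ge\Delta_{h}(A)\Delta_{h}(B)\ge|\langle Ah,Bh\rangle-\langle Ah,h\rangle\langle Bh,h\rangle|$; the last inequality is the first assertion of Theorem~\ref{HRS}, and the first two recover Inequality~(\ref{HR}).

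It then remains only to record the equality in Theorem~\ref{HRS}, which is purely algebraic and uses nothing from Theorem~\ref{NHRS}. Self-adjointness gives $\langle[A,B]h,h\rangle=\langle Bh,Ah\rangle-\langle Ah,Bh\rangle=-2i\operatorname{Im}\langle Ah,Bh\rangle$ and $\langle\{A,B\}h,h\rangle=\langle Bh,Ah\rangle+\langle Ah,Bh\rangle=2\operatorname{Re}\langle Ah,Bh\rangle$, hence $\langle\{A,B\}h,h\rangle-2\langle Ah,h\rangle\langle Bh,h\rangle=2\bigl(\operatorname{Re}\langle Ah,Bh\rangle-\langle Ah,h\rangle\langle Bh,h\rangle\bigr)$; since $\langle Ah,h\rangle\langle Bh,h\rangle$ is real, the complex number $z:=\langle Ah,Bh\rangle-\langle Ah,h\rangle\langle Bh,h\rangle$ has real part $\operatorname{Re}\langle Ah,Bh\rangle-\langle Ah,h\rangle\langle Bh,h\rangle$ and imaginary part $\operatorname{Im}\langle Ah,Bh\rangle$, so $|z|=\tfrac12\sqrt{|\langle[A,B]h,h\rangle|^{2}+|\langle\{A,B\}h,h\rangle-2\langle Ah,h\rangle\langle Bh,h\rangle|^{2}}$, which is precisely the asserted form.

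I expect the one step requiring genuine thought to be the evaluation of $\nabla(f,A,h)$: one must see that the $\mathrm{Lip}_0$-norm of the a priori merely Lipschitz object $fA-f(Ax)f$ collapses to an honest Hilbert-space norm, and this is exactly where self-adjointness of $A$ (so that $f\circ A$ is again represented by the vector $Ah$) together with the Riesz representation theorem enter. The choice of $f$ as the Riesz functional at $h$ is what simultaneously makes $\Delta(B,h,f)$, $\nabla(f,A,h)$, and the right-hand side specialize correctly, and the only other place where care is needed is getting the conjugation right when matching $\langle ABh,h\rangle$ with $\langle Ah,Bh\rangle$.
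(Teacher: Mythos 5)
Your proposal is correct and follows essentially the same route as the paper: specialize Theorem \ref{NHRS} to $\mathcal{X}=\mathcal{H}$, $x=h$, $f=\langle\,\cdot\,,h\rangle$, and use self-adjointness plus the Riesz representation to identify $\nabla(f,A,h)=\Delta_h(A)$, $\Delta(B,h,f)=\Delta_h(B)$, and $|f(ABh)-f(Ah)f(Bh)|=|\langle Ah,Bh\rangle-\langle Ah,h\rangle\langle Bh,h\rangle|$. Your additional verification of the commutator/anticommutator equality is routine algebra the paper leaves implicit, and does not change the substance.
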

\begin{proof}
Let $\mathcal{H}$ be a complex Hilbert space.	  $A:	\mathcal{D}(A)\to  \mathcal{H}$ and $B:	\mathcal{D}(B)\to  \mathcal{H}$  be self-adjoint operators. Let  $h \in \mathcal{D}(AB)\cap  \mathcal{D}(BA)$ with $\|h\|=1$. Define $\mathcal{X}\coloneqq \mathcal{H}$, $\mathcal{M}\coloneqq \mathcal{D}(A)$, $\mathcal{N}\coloneqq \mathcal{D}(B)$, $x\coloneqq h$ and 
\begin{align*}
	f:\mathcal{H}\ni u \mapsto f(u)\coloneqq\langle u, h \rangle \in \mathbb{C}.
\end{align*}
Then 
\begin{align*}
\Delta (B, x, f)=\Delta (B, h,f)= \|Bh-f(Bh) h\|= \|Bh-\langle Bh, h \rangle h \|=	\Delta _h(B), 
\end{align*}
\begin{align*}
	\nabla(f, A,x)&=\nabla(f, A,h)=\|fA-f(Ah) f\|=\|fA-\langle Ah, h \rangle f\|\\
	&=\sup_{u \in \mathcal{H}, \|u\|\leq 1}|f(Au)-\langle Ah, h \rangle f(u)|=\sup_{u \in \mathcal{H}, \|u\|\leq 1}|\langle Au, h \rangle -\langle Ah, h \rangle \langle u, h \rangle |\\
	&=\sup_{u \in \mathcal{H}, \|u\|\leq 1}|\langle u, Ah \rangle -\langle Ah, h \rangle \langle u, h \rangle |=\sup_{u \in \mathcal{H}, \|u\|\leq 1}|\langle u, Ah -\langle Ah, h \rangle h \rangle |\\
	&=\|Ah -\langle Ah, h \rangle h\|=	\Delta _h(A)
\end{align*}
and 
\begin{align*}
|f(ABx)-f(Ax)f(Bx)|&=|f(ABh)-f(Ah)f(Bh)|=|\langle ABh, h \rangle-\langle Ah, h \rangle \langle Bh, h \rangle|\\
&=|\langle Bh, Ah \rangle-\langle Ah, h \rangle \langle Bh, h \rangle|=|\langle Ah, Bh \rangle-\langle Ah, h \rangle \langle Bh, h \rangle|.
\end{align*}
\end{proof}
\section{Acknowledgments}
This research was partially supported by the University of Warsaw Thematic Research Programme ``Quantum Symmetries".

 \bibliographystyle{plain}
 \bibliography{reference.bib}

\end{document}